\documentclass[a4paper,11pt,final]{amsart}

\usepackage{amsmath}
\usepackage{mathtools}
\usepackage[all]{xy}
\usepackage[latin1]{inputenc}
\usepackage{varioref}
\usepackage{amsfonts}
\usepackage{amssymb}
\usepackage{bbm}
\usepackage{graphicx}
\usepackage{mathrsfs}

\usepackage[hypertexnames=false,backref=page,pdftex,
 	pdfpagemode=UseNone,
 	breaklinks=true,
 	extension=pdf,
 	colorlinks=true,
 	linkcolor=blue,
 	citecolor=red,
 	urlcolor=blue,
 ]{hyperref}


\newcommand{\sI}{{\mathcal I}}

\newcommand{\sK}{{\mathcal K}}

\newcommand{\sO}{{\mathcal O}}

\newcommand{\sX}{{\mathcal X}}
\newcommand{\sY}{{\mathcal Y}}


\newcommand{\scrH}{{\mathscr H}}


\newcommand{\C}{{\mathbb C}}

\newcommand{\HH}{{\mathbb H}}

\newcommand{\N}{{\mathbb N}}
\renewcommand{\P}{{\mathbb P}}
\newcommand{\Q}{{\mathbb Q}}
\newcommand{\R}{{\mathbb R}}

\newcommand{\Z}{{\mathbb Z}}


%

\newcommand{\gothm}{{\mathfrak m}}

\newcommand{\gothX}{{\mathfrak X}}


\newcommand{\Art}{\operatorname{Art}}

\newcommand{\chern}{{\rm c}}
\newcommand{\codim}{\operatorname{codim}}

\newcommand{\coker}{\operatorname{coker}}

\newcommand{\Gr}{{\rm Gr}}

\newcommand{\Hom}{\operatorname{Hom}}

\newcommand{\img}{\operatorname{im}}
\newcommand{\into}{{\, \hookrightarrow\,}}
\newcommand{\isom}{\cong}

\newcommand{\lt}{{\rm{lt}}}

\newcommand{\onto}{{{\twoheadrightarrow}}}

\newcommand{\para}{{$\mathsection\,$}}

\newcommand{\red}{{\operatorname{red}}}
\newcommand{\reg}{{\operatorname{reg}}}

\newcommand{\rk}{{\rm rk}}
\newcommand{\sing}{{\operatorname{sing}}}

\newcommand{\Spec}{\operatorname{Spec}}

\renewcommand{\to}[1][]{\xrightarrow{\ #1\ }}

\newcommand{\tensor}{\otimes}
\newcommand{\tOm}{\widetilde{\Omega}}

\newcommand{\tY}{{\widetilde{Y}}}
\newcommand{\tsY}{{\widetilde{\sY}}}

\newcommand{\vphi}{\varphi}
\newcommand{\vrho}{{\varrho}}

\newcommand{\ul}[1]{{\underline{#1}}}

\newcommand{\wt}[1]{{\widetilde{#1}}}

\newtheoremstyle{citing}
  {}
  {}
  {\itshape}
  {}
  {\bfseries}
  {\textbf{.}}
  {.5em}
  {\thmnote{#3}}

\theoremstyle{plain}

\newtheorem{Thm}[subsection]{Theorem}
\newtheorem{theorem}[subsection]{Theorem}

\theoremstyle{definition}

\newtheorem{Cor}[subsection]{Corollary}
\newtheorem{definition}[subsection]{Definition}
\newtheorem{example}[subsection]{Example}
\newtheorem{lemma}[subsection]{Lemma}

\newtheorem{Prop}[subsection]{Proposition}
\newtheorem{Que}[subsection]{Question}
\numberwithin{equation}{section}

\theoremstyle{remark}

\newtheorem{remark}[subsection]{Remark}

{\theoremstyle{citing}
\newtheorem*{custom}{}}

\newcommand{\Def}{{\operatorname{Def}}}

\newcommand{\tomg}{\widetilde{\omega}}
\newcommand{\tom}{\tilde{\omega}}

\title[Deformations of Lagrangian subvarieties]{Deformations of Lagrangian subvarieties of holomorphic symplectic manifolds}

\author{Christian Lehn}
\address{Christian Lehn\\Institut f\"ur Algebraische Geometrie\\
Gottfried Wilhelm Leibniz Universit\"at Hannover\\Welfengarten 1\\30167 Hannover\\Germany}
\email{lehn@math.uni-hannover.de}

\begin{document}
\thispagestyle{empty}

\begin{abstract}
We generalize Voisin's theorem on deformations of pairs of a symplectic manifold and a Lagrangian submanifold to the case of Lagrangian normal crossing subvarieties. Partial results are obtained for arbitrary Lagrangian subvarieties. We apply our results to the study of singular fibers of Lagrangian fibrations.
\end{abstract}


\subjclass[2010]{53D05, 32G10, 13D10, 14C30.}
\keywords{irreducible symplectic manifolds, lagrangian subvarieties, deformations.}

\maketitle

\setlength{\parindent}{0em}
\setcounter{tocdepth}{1}

\tableofcontents

\section*{Introduction}

In \cite{Vo92} Voisin studied deformations of pairs $Y\subset X$ where $X$ is an irreducible symplectic manifold and $Y$ a complex Lagrangian submanifold. She showed that, roughly speaking, deformations of $X$ where $Y$ stays a complex submanifold are exactly those deformations, where $Y$ stays Lagrangian. 
We generalize Voisin's results to Lagrangian subvarieties with normal crossings. 

Let $\pi:\gothX \to M=\Def(X)$ be the universal deformation of $X$. By the Bogomolov-Tian-Todorov theorem, see \cite{Bo78,Ti,To89}, we know that $M$ is smooth. Let $\omega \in R^2\pi_*\C_\gothX\tensor \sO_M$ be a class restricting to a symplectic form on the fibers of $\pi$. For a subvariety $i: Y\into X$ denote by $\Def^\lt(i)$ the base of the universal locally trivial deformation of $i$ and by $p:\Def^\lt(i)\to M$ the forgetful map. Then we have

\begin{custom}[Theorem \ref{thm main}]
Let $i:Y \into X$ be a normal crossing Lagrangian subvariety in a compact irreducible symplectic manifold $X$, let $\nu:\tY\to Y$ be the normalization and denote $j=i\circ\nu$. Consider the subspaces
\[
\begin{xy}
\xymatrix@R=0em{
M_Y := \img(\Def^\lt(i) \to[p] M) \textrm{ and } M_Y' := \left\{t \in M : j^*\omega_t  = 0\right\}\\
}
\end{xy}
\]
of $M$. Then $M_Y'=M_Y$ and this space is smooth of codimension
\[
\codim_M M_Y = \codim_M M'_Y = \rk\left(H^2(X,\C) \to[j^*] H^2(\tY,\C)\right)
\]
in $M$.
\end{custom}

The space $M_Y'$ can be thought of as parametrizing those deformations for which $Y$ remains Lagrangian. We are especially interested in the space $M_Y$; it parametrizes deformations of $X$ such that $Y$ deforms along with it in a locally trivial manner, so in particular, keeping its singularities. We interpret this space as an invariant of the singularities of $Y$. Therefore, considering locally trivial deformations is not a restriction but has a geometric meaning. Note that if $Y$ is smooth, then every deformation is locally trivial. This is why the above theorem is a generalization of \cite[0.1 Th\'eor\`eme]{Vo92}.

Many of the intermediate steps in the proof of Theorem \ref{thm main} are essentially as in \cite{Vo92}, but for the smoothness of $M_Y$ we have to argue differently. For this, we develop ideas of Ran \cite{Ra92Lif}, \cite{Ra92Def} by exploiting the interplay between deformation theory and Hodge theory. 

This is also where there normal crossing hypothesis comes from. We show in Proposition \ref{prop omega is normal} that locally trivial deformations of the Lagrangian subvariety $Y$ inside $X$ are determined by the sheaf $\tOm_Y$. Its relation to Hodge theory is specific to the normal crossing case. Easy examples show that this is no longer the case for other types of singularities, see Example \ref{example k3}.
The necessary tools to apply Hodge theoretical arguments over an Artinian base were developed in \cite{CL12}. 

As in \cite{Vo92}, we deduce the following

\begin{custom}[Corollary \ref{cor main}]
Let $K:=\ker\left(H^2(X,\C) \to[j^*] H^2(\tY,\C)\right)$, let $q$ be the Beau\-ville-Bogomolov quadratic form and consider the period domain
\[
Q := \{\alpha \in \P(H^2(X,\C))\mid q(\alpha)=0, q(\alpha+\bar\alpha)>0\}
\]
of $X$. Then the period map $\wp:M\to Q$ identifies $M_Y$ with $\P(K)\cap Q$ locally at $[X]\in M$.
\end{custom}

A normal crossing Lagrangian subvariety in a symplectic manifold is quite special: it cannot have more than two local branches, see Lemma \ref{lemma lagrangian snc}. I am grateful to Claire Voisin for pointing this out. However, these are still the most important degenerations of Lagrangian submanifolds. For example, the majority of singular fibers of Lagrangian fibrations have normal crossings by the results of Hwang-Oguiso \cite{HO07}; so our results apply. A considerable part of Theorem \ref{thm main} holds true for arbitrary Lagrangian subvarieties. More precisely, we have
\[
(M_Y)_\red \subset M_Y' \textrm{ and } \codim_M M'_Y = \rk\left(H^2(X,\C) \to H^2(\tY,\C)\right),
\]
so that we can at least bound the codimension of $M_Y$, the space we are interested in, from below, see Theorem \ref{thm vormain}. This enables us to deduce results about Lagrangian fibrations.
\begin{custom}[Theorem \ref{theorem fibration}]
Let $X$ be an irreducible symplectic manifold and let $f:X\to B$ be a Lagrangian fibration. Then $X$ can be deformed, keeping the fibration, to an irreducible symplectic manifold $X'$ with a Lagrangian fibration $f':X'\to B'$ such that outside a codimension $2$ subset $Z\subset B'$, all singular fibers of $f'$ over the complement of $Z$ are of Kodaira type I, II, III or IV.
\end{custom}
This result is based on the Kodaira-type classification of singular fibers by Hwang-Oguiso, see section \ref{sec applications} for details. Similar results on Lagrangian fibrations were independently obtained by Justin Sawon \cite{Sa15} by completely different methods. 

Furthermore, the projectivity of arbitrary Lagrangian subvarieties of an irreducible symplectic manifold is shown. 
\begin{custom}[Theorem \ref{theorem line bundle}]
Let $i: Y \into X$ be a complex Lagrangian subvariety in an irreducible symplectic manifold. Then $Y$ is a projective algebraic variety.
\end{custom}

This is used to apply results from \cite{CL12}, but is also interesting in its own right. Again, the statement was known to Voisin in the smooth case.

Let us spend some words about the structure of this article. 
In section \ref{section lagrange} we show that a Lagrangian subvariety in an irreducible symplectic manifold is always projective. Section \ref{sec m} is basically an adaption of Voisin's results from \cite{Vo92} to our setting. The main new results of this article are contained in sections \ref{sec symplectic defo} and \ref{sec main}. In section \ref{sec symplectic defo} we prove smoothness of $\Def^\lt(i)$ in case $Y$ has normal crossings using the $T^1$-lifting principle. It also enables us to deduce that the canonical map $p:\Def^\lt(i) \to M$ has constant rank in a neighbourhood of the distinguished point, which implies the smoothness of the image $M_Y$.
Section \ref{sec main} finally puts together all previous theory to prove Theorem \ref{thm main} along the lines of Voisin's original argument with some additional input from Hodge theory and deformations of normal crossing varieties.
We give applications to Lagrangian fibrations in section \ref{sec applications}. First, we relate deformations of a singular fiber to deformations of the fibration and then we try to deform away from very singular fibers. Our results can be applied to most types of the general singular fibers of a Lagrangian fibration in the sense of Hwang-Oguiso \cite{HO07}. 

\section*{Notations and conventions}\label{subsec notation}

We work over the field $k=\C$ of complex numbers.
The term \emph{algebraic variety} will stand for a separated reduced $k$-scheme of finite type. In particular, a variety may have several irreducible components. Similarly, a \emph{complex variety} will be a separated reduced complex space. If there is no danger of confusion, we will drop the adjectives \emph{algebraic }respectively \emph{complex}. 
A variety $Y$ of equidimension $n$ is called a \emph{normal crossing variety} if for every closed point $y\in Y$ there is an $r\in\N_0$ such that $\widehat{\sO}_{Y,y} \isom k[[ y_1,\ldots,y_{n+1}]]/(y_1\cdot\ldots\cdot y_r)$. It is called a \emph{simple normal crossing variety} if in addition every irreducible component is nonsingular.

\subsection*{Acknowledgements} The large part of this work is the author's thesis. It is a pleasure to thank my advisor Manfred Lehn for his constant support, for suggesting many interesting problems and for thoroughly asking questions. Furthermore, I would like to thank Duco van Straten for some extremely helpful remarks, Yasunari Nagai and Keiji Oguiso for explaining Lagrangian fibrations, Klaus Hulek for helpful discussions on abelian fibrations, Jean-Pierre Demailly, Daniel Greb, Sam Grushevsky, Dmitry Kaledin, Thomas Peternell, S\"onke Rollenske, Justin Sawon, Gerard van der Geer, Claire Voisin and Kang Zuo for valuable discussions. I am also grateful to Justin Sawon for sending me his preprint \cite{Sa15}.

While working on this project, I benefited from the support of the DFG through the SFB/TR 45 ``Periods, moduli spaces and arithmetic of algebraic varieties'' and the DFG research grant Le 3093/1-1.

\section[Lagrangian subvarieties]{Projectivity of Lagrangian subvarieties}\label{section lagrange}
Let $X$ be an irreducible symplectic manifold, that is, a compact, simply connected K\"ahler manifold such that $H^0(X,\Omega_X^2) = \C \omega$ for a symplectic form $\omega$. A Lagrangian subvariety $i:Y \into X$ is a subvariety of dimension $\frac{\dim X}2$ such that $i^*\omega \in H^0(\Omega_Y^2)$ vanishes on $Y^\reg\subset Y$.

If $Y\subset X$ is a smooth Lagrangian subvariety, then by an argument of Voisin, $Y$ is projective even if $X$ is only K\"ahler, see \cite[Prop 2.1]{Ca06}. If $Y\subset X$ is a singular Lagrangian subvariety, it is natural to ask whether $Y$ is still projective. The following affirmative answer to this question  is used in the proof of our main theorem, but also interesting in its own right.
 The proof is a careful adaption of Voisin's argument to the singular setting. 
\begin{theorem}\label{theorem line bundle}
Let $i: Y \into X$ be a complex Lagrangian subvariety in an irreducible symplectic manifold. There is a line bundle $L$ on $Y$ such that $\chern_1\left(L\right) = i^* \lambda$ for some K\"ahler class $\lambda$ on $X$. In particular, $Y$ is a projective algebraic variety.
\end{theorem}
\begin{proof} Isomorphism classes of line bundles on $Y$ are classified by the group $H^1(Y,\sO^\times_Y)$, see \cite[Kap V, \para 3.2]{GR}. This cohomology group appears in the commutative diagram
\[
\begin{xy}
\xymatrix{
\ldots \ar[r] & H^1(Y,\sO_Y^\times) \ar[r] & H^2(Y, \Z) \ar[r] \ar[d] & H^2(Y,\sO_Y) \ar[r] & \ldots\\
& & H^2(Y,\C) \ar[r] & \HH^2(Y,\Omega_Y^\bullet) \ar[u] & \\
& & & \HH^2(Y,\Omega_Y^{\geq 1}) \ar[u] & \\
}
\end{xy}
\]
where the first line is the long exact sequence associated with the exponential sequence, see \cite[Kap V, \para 2.4]{GR}, and the right vertical column comes from the short exact sequence 
\[
0\to \Omega_Y^{\geq 1} \to \Omega_Y^\bullet \to \sO_Y \to 0.
\]
To obtain a holomorphic line bundle $L$ on $Y$ it is sufficient to find a class $\alpha \in H^2(Y,\Z)$, such that the image in $\HH^2(Y,\Omega_Y^\bullet)$ comes from $\HH^2(Y,\Omega_Y^{\geq 1})$. Such $L$ will have $\chern_1(L)=\alpha$.

As $X$ is K\"ahler, so is $Y$. Hence, the $H^k(Y,\Q)$ carry a mixed Hodge structure. Let us consider a resolution of singularities $\pi:\tY \to Y$. If $W_m \subset H^2(Y,\C)$ denotes the weight filtration, then $\pi^*$ factors as 
$$\pi^*:H^2(Y,\C) \onto H^2(Y,\C)/W_1 \into H^2(\tY,\C).$$ 
As $Y$ is Lagrangian, we have $i^*\omega = 0$ in $H^2(Y,\C)$ where $\omega \in H^0(X,\Omega_X^2)$ is the symplectic form on $X$. Indeed, it maps to $0$ in $H^2(\tY,\C)$ and as $X$ is smooth and morphisms of Hodge structures are strict, $i^*\omega$ is in $W_1$ if and only if it is zero. Consequently, also $H^{0,2}(X)$ maps to $0$ in $H^2(Y,\C)$.

Let us look at the composition $r:H^2(X,\C)\to H^2(Y,\C)\to \HH^2(Y,\Omega_Y^\bullet)$ and let $H \subset \HH^2(Y,\Omega_Y^\bullet)$ denote the image of $H^2(X,\R)$. 
By the Hodge-theoretic considerations above, the image of the K\"ahler cone $r(\sK_X)$ is open in $H$ and clearly, $r(H^2(X,\Q))$ is dense in $H$
so that there is in $0\neq \alpha'\in r(\sK_X) \cap r(H^2(X,\Q))$. Then a multiple $\alpha= m \cdot \alpha'$ is contained in $r(H^2(X,\Z) \cap i^*r(\sK_X)$ and we obtain a line bundle $L$ on $Y$ with the desired property by using the exponential sequence as explained above.

We conclude that $Y$ is projective by \cite[Chapter V, Corollary 4.5]{SCVVII}, see also \cite[3, Satz 1 and Satz~2]{Gra62}.
\end{proof} 

\section[Deformations]{Deformations of symplectic manifolds and Lagrangian subvarieties}\label{sec m}

As $H^0(X, T_{X}) = 0$ for an irreducible symplectic manifold $X$, the Kuranishi family $\pi:\gothX \to M=\Def(X)$ is universal at the point $0\in M$ corresponding to $X$. 
Close to $0\in M$ the fibers of $\pi$ are again irreducible symplectic manifolds, see \cite[\para 8]{Be83}. 
$M$ is known to be smooth by the Bogomolov-Tian-Todorov theorem \cite{Bo78,Ti,To89}, see also \cite[Thm 14.10]{GHJ}. Therefore, $\dim M = \dim T_{M,0}=h^1(T_X)=h^{1,1}(X)$.

\subsection{Deformations of closed immersions}\label{subsection defi}
We refer to \cite{Se} for an introduction to deformation theory. By $\Art_k$ we denote the category of local Artinian $k$-algebras with residue field $k$. 
Let $i:Y\into X$ be a closed immersion of algebraic $k$-schemes and suppose that $X$ is smooth and proper.
Let $R\in \Art_k$. A \emph{deformation of $i$} over $S=\Spec R$ is a diagram
\begin{equation}\label{eq defo morphism}
\xymatrix{
\sY \ar[rd] \ar@{^(->}[r] & \sX \ar[d]\\
& S, \\
}
\end{equation}
where $\sX\to S$ and $\sY\to S$ are flat and the fiber of \eqref{eq defo morphism} over $k=R/\gothm_R$ is isomorphic to $i:Y\into X$. 
Such a deformation is \emph{locally trivial} if for every $x\in X$, $y\in Y$ with $i(y)=x$ there is an open subset $U\subset X$  and such that $x \in U$ and the restriction $\sY\vert_U \into \sX\vert_U$ is a trivial deformation of $Y\cap U \into U$.

The tangent space to the deformation space of locally trivial deformations of a proper $k$-variety $Y$ is given by $H^1(T_Y)$, see \cite[Proposition 1.2.9]{Se}. Let $i:Y\into X$ be a closed immersion with smooth $X$ and let $T_{X}\langle Y\rangle$ be the kernel of the natural map $T_X \to i_*N_{Y/X}$, see \cite[3.4.4]{Se}. Then the tangent space to the deformation space for locally trivial deformations of $i$ is given by $H^1(T_{X}\langle Y\rangle)$, see \cite[Proposition 3.4.17]{Se}. 
More generally, given a diagram like \eqref{eq defo morphism}, we define $T_{\sX/S}\langle \sY\rangle$ by the exact sequence of sheaves on $\sX$
\begin{equation}\label{defo of morphism}
\begin{xy}
\xymatrix{
0\ar[r] & T_{\sX/S}\langle \sY\rangle \ar[r]&  T_{\sX/S} \ar[r]& N'_{\sY/\sX} \ar[r]& 0,\\
}
\end{xy}
\end{equation}
where
\begin{equation}\label{eq ns}
N'_{\sY/\sX}:=\ker(N_{\sY/\sX} \to T^1_{\sY/S})
\end{equation}
is the \emph{equisingular normal sheaf}.

Let $i:Y\into X$ be the inclusion of a closed subvariety in an irreducible symplectic manifold. Then, as a consequence of \cite{FK}, there is a universal locally trivial deformation of $i$ over a (germ of a) complex space $\Def^\lt(i)$.
 The inclusion $Y \into X$ gives a point $0\in \Def^\lt(i)$. By construction there is a forgetful morphism $p:\Def^\lt(i) \to M$ of complex spaces with $p(0)=0$. 
\begin{definition}\label{definition my}
We denote by $M_Y \subset M$ the image of $p$, that is, the smallest closed complex subspace such that $p$ factors through $M_Y \into M$.
\end{definition}
\subsection{The locus where a subvariety is Lagrangian}\label{subsec subspaces for lagrangian}
Let $i: Y \hookrightarrow X$ be the inclusion of a Lagrangian subvariety in an irreducible symplectic manifold $X$ of dimension $2n$.

We take a flat section $0\neq \omega \in R^0\pi_*\Omega^2_{\gothX/M}\into \scrH^2:=R^2\pi_*\ul\C_\gothX \tensor \sO_M$ 
and write $\omega_t:=\omega\vert_{X_t}$ for the symplectic form on the fiber $X_t=\pi^{-1}(t)$. We interpret $\omega_t$ as an element of $H^2(X,\C)$. Let $[Y]\in H^{2n}(X,\Z)$ denote the Poincar\'e dual of the fundamental cycle of $Y$. It has a unique flat lift to $\scrH^2$ and we denote by $[Y]_t$ the restriction of this lift to $\scrH^2_t = H^2(X_t,\C)$. Let us denote by $\nu :\tY \to Y$ a resolution of singularities and put $j=i\circ \nu$.	

\begin{definition}\label{definition subspaces}
With these notations following Voisin \cite{Vo92} we define 
\begin{equation}\label{m2}
M_Y' := \left\{t \in M \mid j^*\omega_t = 0 \textrm{ in } H^2(\tY,\C)\right\}.
\end{equation}
Clearly, this definition is independent of the resolution $\nu:\tY\to Y$.
\end{definition}
\begin{lemma} \label{lemma my' tangent}
The tangent space of $M_Y'$ at $0$ is given by
\begin{equation}\label{tmyprime}
T_{M'_Y,0} = \ker \left(H^1(T_X) \to[\omega'] H^1(\Omega_X) \to[j^* ] H^1(\Omega_\tY)\right)
\end{equation}
where $\omega'$ is the isomorphism induced by the symplectic form on $X$.
\end{lemma}
\begin{proof}
Locally at $0\in M$ the space $M'_{Y}$ is cut out by the equation $j_t^*\omega_t = 0$. Therefore, the tangent space at $0$ is given by the equation
\[
0=\left(\nabla j_t^*\omega_t\right)\vert_{t=0}=j^*\left(\nabla\omega_t\right)\vert_{t=0},
\]
where $\nabla$ is the Gau\ss-Manin connection. At $0$ it can be identified with the map
$H^0(\Omega_X^2) \to  \Hom(H^1(T_X),H^1(\Omega_X))$ 
given by cup product and contraction, which concludes the proof.
\end{proof}
\begin{Thm}\label{thm vormain}
Let $i:Y \into X$ be a Lagrangian subvariety in a compact irreducible symplectic manifold 
$X$, let $\nu:\tY\to Y$ be a resolution of singularities and denote $j=i\circ\nu$. Then $(M_Y)_\red \subset M_Y'$ and $M_Y'$ is smooth of codimension
\begin{equation}\label{codim prime}
\codim_M M'_Y = \rk\left(H^2(X,\C) \to[j^*] H^2(\tY,\C)\right)
\end{equation}
in $M$.
\end{Thm}
\begin{proof}
First assume that $Y$ is irreducible. 
Voisin shows in \cite[Propositions 1.2 and 1.7]{Vo92} that $M'_{Y}$ is a smooth submanifold of $M$ and that it coincides the Hodge locus $M_{[Y]}$ associated with the class $[Y]$ of $Y$ in $H^{2n}(X,\C)$, see \cite[Ch 5.3]{Vo2}. Smoothness of $Y$ is not needed for the first proposition, as its proof only uses the class of $Y$. The second one uses \cite[Lemme 1.5]{Vo92} which has to be replaced by Lemma \ref{lem rang} below.

Now let $Y=\cup_iY_i$ be a decomposition into irreducible components. Then set-theoretically
\begin{equation}\label{eq inclusions}
 M_Y \subset \bigcap_i M_{Y_i} \subset \bigcap_i M_{[Y_i]} = \bigcap_i M_{Y_i'} = M_Y',
\end{equation}
where the first inclusion is a consequence of \cite[Lemma 1.4]{CL12}, the inclusion $M_{Y_i}\subset M_{[Y_i]}$ is obvious and the equalities follow from 
the irreducible case and the definition of $M_Y'$. 
The statement about the codimension is deduced from the description \eqref{tmyprime} of the tangent space of $M_Y'$. 
\end{proof}
The following straight-forward generalization of \cite[Lemme 1.5]{Vo92} will complete the proof of Theorem \ref{thm vormain}. We include a full proof for convenience. Let $\mu:H^2(X,\C)\to H^{2+2n}(X,\C)$ be the map given by cup product with $[Y]$ and observe that it factors as
$H^2(X,\C)\to[j^*] H^2(\tY,\C) \to[j_*]  H^{2+2n}(X,\C)$.
\begin{lemma}\label{lem rang}
If $Y$ is irreducible, then
\[
\ker \left(H^2(X,\C) \to[\mu] H^{2n+2}(X,\C)\right) = \ker \left(H^2(X,\C)\to[j^*] H^2(\tY,\C)\right).
\]
\end{lemma}
\begin{proof}
We show equality of the respective kernels with real coefficients. From $\mu = j_* j^*$ we immediately have $\ker j^* \subset \ker \mu$. For the other inclusion we choose a K\"ahler class $\kappa \in H^2(X,\R)$. We have to show that $j_*$ is injective on $\img j^*$.

Assume $n=1$. As $\tY$ is connected, $H^2(\tY,\C)\isom \C$ and the map $j_*:H^2(\tY,\C)\to H^2(X,\C)$ is given by $1\mapsto [Y]$. As $X$ is K\"ahler, $[Y]\neq 0$. So $j_*$ is injective and the claim follows.

If $n\geq 2$, choose a K\"ahler class $\kappa \in H^2(X,\R)$. 
We may assume that $\tY \to Y$ is obtained by a sequence of blow-ups in smooth centers. Hence there is a K\"ahler class of the form $\wt\kappa = j^* \kappa - \sum_{i}\delta_i E_i \in H^2(\tY,\R)$ where the $E_i$ are exceptional divisors and $\delta_i \in \Q$ are positive.
We define a bilinear form 
\[
q(\alpha,\beta):=\int_\tY \wt\kappa^{n-2}.\alpha.\beta \qquad \alpha, \beta \in H^2(\tY,\C)
\]
on $H^2(\tY,\C)$. For $\alpha, \beta \in H^2(X,\R)$ this gives
\[
\begin{aligned}
q(j^*\alpha, j^*\beta) &= \int_\tY \wt\kappa^{n-2}.j^*(\alpha.\beta) = \int_X j_*\left(\wt\kappa^{n-2}.j^*(\alpha.\beta)\right)\\
& = \int_X\mu(\kappa^{n-2}).\alpha.\beta
 = \int_X\kappa^{n-2}.\mu(\alpha).\beta.
\end{aligned}
\]
So we see that if $\mu(\alpha)=0$, then $q(j^*\alpha, j^*\beta)=0$ for all $\beta \in H^2(X,\R)$. To conclude that $j^*\alpha = 0$ it would be sufficient to see that $q$ is non-degenerate on $\img j^* \subset H^2(\tY,\R)$. On the whole of $H^2(\tY,\R)$ the form $q$ is non-degenerate by the Hodge index theorem, see \cite[Thm 6.33]{Vo1}. Here we need that $\wt\kappa$ is a K\"ahler class. That $q$ remains non-degenerate on the subspace $\img j^*$ can also be deduced as follows. As $Y$ is Lagrangian $\img j^* \subset H^{1,1}(\tY,\R):=H^{1,1}(\tY) \cap H^2(\tY,\R)$ and on $H^{1,1}(\tY,\R)$ the form $q$ is non degenerate and has signature $(1,h^{1,1}-1)$. We know that $q(j^*\kappa,j^*\kappa) > 0$ and so $q$ is negative definite on $j^*\kappa^\perp$. Write $j^*\alpha = c \cdot j^*\kappa + \alpha'$ where $\alpha'\in j^*\kappa^\perp$. The decomposition shows that $\alpha' \in \img j^*$ as well. Then if $j^* \alpha \neq 0$ at least one of the numbers $q(j^*\alpha, j^*\kappa)$, $q(j^*\alpha, \alpha')$ is nonzero and so $\mu(\alpha)\neq 0$.
\end{proof}

\section{Normal crossing subvarieties}\label{sec symplectic defo}
Our next goal is to prove smoothness of the space $\Def^\lt(i)$ of locally trivial deformations of $i:Y\into X$, see Theorem \ref{thm mi smooth}, using a variant of the $T^1$-lifting principle. 
Smoothness plays an important role in the proof of our main result, Theorem \ref{thm main}. 
We start with some preliminary considerations on normal crossing varieties.

\begin{definition}\label{definition tom}
Let $f:\sY \to S$ be a proper morphism of schemes. We define $\tau^k_{\sY/S} \subset \Omega^k_{\sY/S}$ to be the subsheaf of sections whose support is contained in the singular locus of $f$. 
We put $\tOm^k_{\sY/S}:=\Omega^k_{\sY/S}/\tau^k_{\sY/S}$. Clearly, the exterior differential makes $\tOm_{\sY/S}^\bullet$ into a complex.
\end{definition}
If $Y$ is a normal crossing $\C$-variety, then the natural map $\C \to \tOm_Y^\bullet$ is a resolution. Moreover, if $Y$ is proper, this complex can be used to define the mixed Hodge structure on $H^k(Y,\C)$ as it has been done in \cite{Fr} if $Y$ has simple normal crossings. For a locally trivial deformation $f:\sY\to S$ of a simple normal crossing variety over an Artinian base scheme, it has been shown in \cite{CL12} that $\tOm_{\sY/S}^\bullet$ is a resolution of the constant sheaf $f^{-1}\sO_S$ and the Hodge theoretic analogues of Friedman's results have been established. It is possible to extend these results to the normal crossing case, i.e. components are allowed to have self-intersections. For this, let $Y$ be a normal crossing variety. We need a semi-simplicial resolution $\xymatrix@C=1em{
\ldots  \ar[r]\ar@<0.5ex>[r]\ar@<-0.5ex>[r] & Y^{[1]} \ar@<0.5ex>[r]\ar@<-0.5ex>[r] & Y^{[0]} \ar[r]& Y \\ }$ which replaces the canonical one in the simple normal crossing case, see \cite[p. 77]{Fr} and \cite[4.4]{CL12}. 
For Lagrangian subvarieties the situation is very simple. I am grateful to Claire Voisin for this observation. Its proof is straightforward, cf. \cite[Lemma 5.3]{GLR}.
\begin{lemma}\label{lemma lagrangian snc}
If $Y\subset X$ is a Lagrangian subvariety with normal crossings in a symplectic manifold $X$, then locally there cannot be more than two components. \qed
\end{lemma}
\begin{remark}\label{remark only nc}
 We thus obtain a semi-simplicial resolution where $\nu:Y^{[0]}\to Y$ is the normalization, $Y^{[1]} :=\nu^{-1}(Y^\sing)$ and the morphisms $\xymatrix@C=1em{Y^{[1]} \ar@<0.5ex>[r]\ar@<-0.5ex>[r] & Y^{[0]}}$ are the inclusion and its composition with the canonical involution $\tau :Y^{[1]}\to Y^{[1]}$ exchanging the two branches. Using this resolution, the Hodge theoretic results from \cite{CL12} carry over to the normal crossing situation.
\end{remark}

Let $\sX\to S=\Spec R$ for $R\in \Art_k$ be a deformation of an irreducible symplectic manifold $X$ and let $\omega\in H^0(\Omega^2_{\sX/S})$ be a relative symplectic form.
\begin{lemma}\label{lemma deformation is lagrange}
Let $i:Y \into X$ be a normal crossing Lagrangian subvariety. If $\sY\into \sX$ is a locally trivial deformation of $i$ over $S$, then $\sY$ is Lagrangian with respect to the symplectic form $\omega$ on $\sX$.
\end{lemma}
\begin{proof}
Let $\tsY\to S$ be the locally trivial deformation of the normalization of $Y$ obtained from \cite[Lemma 4.5]{CL12}. Note that $Y$ is projective by Theorem \ref{theorem line bundle}, so Lemma \cite[Lemma 4.5]{CL12} can be applied. 
As $Y$ has normal crossings, $\tsY\to S$ is smooth and $H^0(\Omega^2_{\sX/S})$ and $H^0(\Omega^2_{\tsY/S})$ are free $\sO_S$ modules by \cite[Th\'eor\`eme 5.5]{De68}.
Therefore, the pullback $H^0(\Omega^2_{\sX/S})\to H^0(\Omega^2_{\tsY/S})$ has constant rank by \cite[Theorem 4.17]{CL12}. As $\rk (j^*\tensor\C)=0$ on the central fiber, $j^*$ is identically zero and thus $\sY$ is Lagrangian.
\end{proof}
\begin{lemma}\label{lemma two sequences}
Let $i:Y\into X$ be a Lagrangian subvariety in an irreducible symplectic manifold $X$, let $S=\Spec R$ where $R\in \Art_\C$ and let $\sY\into \sX$ is a locally trivial deformation of $i$ over $S$. Then the symplectic form $\omega\in H^0(\Omega^2_{\sX/S})$ induces a morphism between the exact sequences 
\begin{equation}\label{omegat sequence}
\begin{xy}
\xymatrix{
& \sI/\sI^2 \ar[r] \ar@{-->}[d]^{\omega^{-1}}&  \Omega_{\sX/S} \otimes \sO_\sY \ar[r]\ar[d]^{\omega^{-1}}& \Omega_{\sY/S} \ar@{-->}[d]^{\omega'}\ar[r]& 0\\
0\ar[r] & T_{\sY/S} \ar[r]&  T_{\sX/S} \otimes \sO_\sY \ar[r]^\alpha & N_{\sY/\sX} \ar[r]& T^1_{\sY/S} \ar[r]& 0.\\
}
\end{xy}
\end{equation}
\end{lemma}
\begin{proof}
Since $\omega$ is non-degenerate, the map $\omega^{-1}:\Omega_{\sX/S} \to T_{\sX/S}$ is an isomorphism. The composition $\vphi:\sI/\sI^2 \to N_{\sY/\sX}=\Hom(\sI/\sI^2,\sO_\sY)$ is given by $f\mapsto \{f, \cdot\}$ where $\{\cdot,\cdot\}$ is the Poisson bracket associated with $\omega$. So $\vphi=0$ and the restriction of $\omega^{-1}$ to $\sI/\sI^2$ factors through $T_{\sY/S}=\ker \alpha$. Once we have this, we obtain a morphism $\omega' : \Omega_{\sY/S} \to N_{\sY/\sX}$, as the first line of \eqref{omegat sequence} is exact, by lifting sections to $\Omega_{\sX/S} \otimes \sO_\sY$.
\end{proof}
It is well-known that if in the situation of the preceding lemma the morphism $f:\sY\to S$ is smooth, then $\omega$ gives an isomorphism $\Omega_{\sY/S} \to N_{\sY/\sX}$.
The following Proposition \ref{prop omega is normal} explains what happens for singular Lagrangian subvarieties.

\begin{Prop}\label{prop omega is normal}
Let $i:Y\into X$ be a Lagrangian subvariety in an irreducible symplectic manifold $X$, let $S=\Spec R$ where $R\in \Art_\C$ and let $\sY \into \sX$ be a locally trivial deformation of $i$ over $S$. Let $\omega':\Omega_{\sY/S} \to N_{\sY/\sX}$ be as in \eqref{omegat sequence} and let $N'_{\sY/\sX}$ be the equisingular normal sheaf defined in \eqref{eq ns}. Then the diagram 
\begin{equation}\label{eq omega normal bundle}
\begin{xy}
\xymatrix{
\Omega_{\sY/S} \ar[r]^\omega\ar[d]& N_{\sY/\sX}\\
\tOm_{\sY/S} \ar@{-->}[r]_{\exists \, \tomg}  & N'_{\sY/\sX}\ar@{^(->}[u]\\
}
\end{xy}
\end{equation}
can be completed and $\tomg:\tOm_{\sY/S}\to N'_{\sY/\sX}$ is an isomorphism.
\end{Prop}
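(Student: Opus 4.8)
The plan is to analyze the map $\omega':\Omega_{\sY/S}\to N_{\sY/\sX}$ produced in Lemma \ref{lemma two sequences} and to show that, after passing to the quotient $\tOm_{\sY/S}$, it becomes an isomorphism onto $N'_{\sY/\sX}$. First I would observe that $\omega'$ automatically factors through $N'_{\sY/\sX}$: by its construction a local section of $\Omega_{\sY/S}$ is lifted to $\Omega_{\sX/S}\tensor\sO_\sY$, transported by the isomorphism $\omega^{-1}$ into $T_{\sX/S}\tensor\sO_\sY$, and then mapped by $d^\vee$ into $N_{\sY/\sX}$. Hence the image of $\omega'$ lies in $\img d^\vee=\ker(N_{\sY/\sX}\to T^1_{\sY/S})=N'_{\sY/\sX}$ by \eqref{tangent sequence} and \eqref{eq ns}, so $\omega'$ factors as $\Omega_{\sY/S}\to N'_{\sY/\sX}\into N_{\sY/\sX}$.

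Next, the heart of the argument is to identify $\ker\omega'$ with the torsion subsheaf $\tau_{\sY/S}$, so that $\omega'$ descends to an injection $\tomg:\tOm_{\sY/S}\into N'_{\sY/\sX}$. For the inclusion $\ker\omega'\subset\tau_{\sY/S}$ I would restrict to the smooth locus $U$ of $f$: there $f|_U$ is smooth, so Corollary \ref{cor two sequences} gives that $\omega'|_U$ is an isomorphism. Consequently $\ker\omega'$ vanishes on $U$, hence is supported on the (nowhere dense) singular locus of $f$, and is therefore contained in $\tau_{\sY/S}$ by definition. For the reverse inclusion $\tau_{\sY/S}\subset\ker\omega'$, the key point is that $N'_{\sY/\sX}$ has no nonzero sections supported on the singular locus. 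Indeed $N_{\sY/\sX}=\Hom(\sI/\sI^2,\sO_\sY)$ is locally free because $Y$ is a local complete intersection, and by local triviality we may compute locally where $\sO_\sY\isom\sO_Y\tensor_\C R$; since $Y$ is reduced, $\sO_Y$ admits no nonzero section supported on the nowhere dense singular locus, and the same then holds for the free module $N_{\sY/\sX}$ and its subsheaf $N'_{\sY/\sX}$. As $\omega'(\tau_{\sY/S})$ is a section of $N'_{\sY/\sX}$ supported on the singular locus, it must vanish, giving $\tau_{\sY/S}\subset\ker\omega'$. Combining both inclusions yields $\ker\omega'=\tau_{\sY/S}$ and hence the desired injection $\tomg$.

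It remains to prove surjectivity of $\tomg$, equivalently of $\omega':\Omega_{\sY/S}\to N'_{\sY/\sX}$. For this I would apply the snake lemma to the morphism of short exact sequences whose top row is the first line of \eqref{omegat sequence} and whose bottom row is $0\to T_{\sY/S}\to T_{\sX/S}\tensor\sO_\sY\to[d^\vee] N'_{\sY/\sX}\to 0$ (exactness here is again \eqref{tangent sequence} together with \eqref{eq ns}), the three vertical maps being $\omega^{-1}$ on the conormal sheaf, $\omega^{-1}$ in the middle, and $\omega'$ on the right. Since the middle vertical map is an isomorphism, the snake sequence forces $\coker\omega'=0$, so $\omega'$ surjects onto $N'_{\sY/\sX}$. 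Therefore $\tomg:\tOm_{\sY/S}\to N'_{\sY/\sX}$ is both injective and surjective, hence an isomorphism. The analytic statement follows verbatim, since local triviality, local freeness of $\sI/\sI^2$ and $N_{\sY/\sX}$, and Corollary \ref{cor two sequences} all hold in the analytic category.

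The step I expect to be the main obstacle is the reverse inclusion $\tau_{\sY/S}\subset\ker\omega'$, that is, the absence of sections of $N'_{\sY/\sX}$ supported along the singular locus: the total space $\sY$ is non-reduced over the Artinian base, so one cannot simply invoke reducedness of $\sO_\sY$, and it is precisely the local-triviality reduction to $\sO_Y\tensor_\C R$ combined with reducedness of the central fibre $Y$ that makes the argument go through.
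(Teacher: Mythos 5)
Your proposal is correct and follows essentially the same route as the paper's own proof: you obtain the factorization through $\tOm_{\sY/S}$ by showing that $N'_{\sY/\sX}$ admits no nonzero sections supported on the singular locus of $f$, you get injectivity of $\tomg$ because $\omega'$ is an isomorphism over the smooth locus, and you get surjectivity onto $N'_{\sY/\sX}$ by a diagram chase (your snake lemma) in \eqref{omegat sequence}. The only cosmetic difference is in the first step: the paper invokes Cohen--Macaulayness of the locally free sheaf $N_{\sY/\sX}$ (hence no embedded primes, by Matsumura), whereas you reduce via local triviality to $\sO_Y\tensor_\C R$ and use reducedness of the central fiber $Y$ --- both arguments are valid.
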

\begin{proof}
As $Y$ is Lagrangian, it is of pure dimension and thus $\sO_Y$ and also $\sO_\sY$ have no embedded primes. Locally, the sheaf $N_{\sY/\sX}$ can be embedded in a locally free sheaf and thus it does not have any embedded primes either.
Hence, $\tau^1_{\sY/S}$ maps to zero and $\tomg$ exists. But as $\omega$ is an isomorphism at smooth points of $f$, the support of $\ker \omega$ is contained in the singular locus of $f$, hence $\ker \omega \subset \tau^k_{\sY/S}$ and $\tomg$ is injective. Moreover, $\tOm_{\sY/S}$ maps onto $\ker(N_{\sY/\sX} \to T^1_{\sY/S})$ by \eqref{omegat sequence}, hence is identified with $N'_{\sY/\sX}$. 
\end{proof}

\subsection{The $T^1$-lifting Principle} \label{subsec t1-lifting}
To prove smoothness of $\Def^\lt(i)$ we will use Ran's $T^1$-lifting principle \cite{Ra92Def,Ka92, Ka97}, we refer to \cite{mydiss} for a gentle introduction. The basic idea is that in order to prove smoothness of a deformation functor it suffices to show that the corresponding $T^1$-modules are locally free for every infinitesimal deformation over a local Artinian scheme. This is achieved by means of Hodge theory.
\begin{Thm}\label{thm mi smooth}
Let $Y$ be a Lagrangian normal crossing subvariety of an irreducible symplectic manifold $X$. Then the complex space $\Def^\lt(i)$ is smooth at $0$.
Moreover, $M_Y$ is smooth and $\Def^\lt(i) \to M_Y$ is a submersion.
\end{Thm}
\begin{proof}

We have to show that the $H^1(T_{\sX/S}\langle \sY\rangle)$ are free. The sheaf $T_{\sX/S}\langle \sY\rangle$ was defined in \eqref{defo of morphism}.
Let $i:Y\into X$ be the inclusion and let $\sY\into \sX$ be a locally trivial deformation of $i$ over $S = \Spec R$ for $R\in \Art_\C$. Consider the long exact sequence
\begin{equation}\label{defo die lange original}
0 \to H^0(T_{\sX/S}\langle \sY\rangle) \to  H^0(T_{\sX/S}) \to H^0(N'_{\sY/\sX}) \to H^1(T_{\sX/S}\langle \sY\rangle) \to \ldots\\
\end{equation}
obtained from the sequence \eqref{defo of morphism}. We transform this sequence using the isomorphism $T_{\sX/S} \isom \Omega_{\sX/S}$, Lemma \ref{lemma deformation is lagrange}, Proposition \ref{prop omega is normal} and $H^0(\Omega_{\sX/S})=0$ to obtain an exact sequence
\begin{equation}\label{defo die lange}
0\to H^0(\tOm_{\sY/S}) \to H^1(T_{\sX/S}\langle \sY\rangle) \to  H^1(\Omega_{\sX/S}) \to H^1(\tOm_{\sY/S}) \to\ldots\\
\end{equation}
Recall from Definition \ref{definition my} that we have a factorization $p:\Def^\lt(i) \to M_Y \into M$. However, in general it is not clear whether $\Def^\lt(i) \to M_Y$ is surjective, let alone submersive.
By Theorem \cite[Th\'eor\`eme 5.5]{De68} we know that $H^k(\Omega_{\sX/S})$ is free. By Theorem \ref{theorem line bundle} we know that $Y$ is a projective variety, so Theorem \cite[Theorem 4.13]{CL12} applies and $H^k(\tOm_{\sY/S})$ is free. Note that the results of \cite{CL12} carry over literally to the normal crossing case as was explained in Remark \ref{remark only nc}. Then by Theorem \cite[Theorem 4.22]{CL12} also the cokernel (and hence the kernel) of $H^k(\Omega_{\sX/S}) \to H^k(\tOm_{\sY/S})$ is free. From sequence \eqref{defo die lange} we deduce that all $H^k(T_{\sX/S}\langle \sY\rangle)$ are free and that all morphisms in \eqref{defo die lange} have constant rank. In particular, all morphisms in \eqref{defo die lange original} have constant rank. The $T^1$-lifting principle implies that $\Def^\lt(i)$ is smooth. 

So the canonical morphism $p:(\Def^\lt(i),0) \to (M,0)$ is just a holomorphic map between (germs of) complex manifolds. 
To prove the theorem it suffices to show that ts differential $Dp$ has constant rank in a neighbourhood of $0$.
This holds if the stalk of $\coker(p_*: T_{\Def^\lt(i)}\to p^* T_M)$ at $0$ is free. Freeness may be tested after completion, and then by the local criterion for flatness \cite[Thm A.5]{Se} we may test it for the truncations modulo powers of the maximal ideal. In other words we have to verify, given as above a locally trivial deformation $\sY \into \sX$ of $i$ over $S=\Spec R$ with $R\in \Art_\C$, that the map $H^1(T_{\sX/S}\langle \sY\rangle) \to H^1(T_{\sX/S})$ has constant rank. This was already noted in the first part of the proof.
\end{proof} 

\section{Codimension formula}\label{sec main}
Let $i: Y \hookrightarrow X$ be the inclusion of a Lagrangian subvariety in an irreducible symplectic manifold. In this section we show that if $Y$ has normal crossings, then the inclusion $M_Y\subset M_Y'$ from Theorem \ref{thm vormain} is an equality. In this way, we obtain a formula for the codimension of $M_Y$.

\begin{lemma}\label{lemma weight argument}
Suppose $Y$ has normal crossings. Then $$\ker\left(H^1(\Omega_X) \to[j^*] H^1(\Omega_\tY)\right) = \ker\left(H^1(\Omega_X) \to[i^*] H^1(\tOm_Y)\right),$$
where $\nu:\tY\to Y$ is the normalization.
\end{lemma}
\begin{proof}
As $j^* = \nu^* \circ \, i^*$ the inclusion $\supset$ is obvious. For the other direction it suffices to show that $\nu^*$ is injective on $\img i^*$. By Theorem \ref{theorem line bundle} the subvariety $Y$ is projective, hence by \cite{De71,De74} there is a functorial mixed Hodge structure on $H^k(Y,\C)$ for every $k$. We denote by $F^\bullet$ the Hodge filtration on $H^2(Y)$ and by $W_\bullet$ the weight filtration. As a special case of \cite[Cor 4.16]{CL12}, we deduce that 
\[
H^1(\tOm_Y)= \Gr_F^1 H^2(Y) = F^1H^2(Y) / F^2H^2(Y).
\]
Let $\begin{xy}
\xymatrix@C=1em{
\ldots  \ar[r]\ar@<0.5ex>[r]\ar@<-0.5ex>[r] & Y^{1} \ar@<0.5ex>[r]\ar@<-0.5ex>[r] & Y^{0} \ar[r]& Y \\
}
\end{xy}$ be the canonical semi-simplicial resolution in the simple normal crossing case, see e.g. \cite[4.8]{CL12}, or the one from Remark \ref{remark only nc} in the normal crossing case. Note that $\tY=Y^{0}$. Consider the weight spectral sequence associated with the first graded objects of the Hodge filtration given by
\begin{equation}\label{gr spec seq}
E_1^{r,s}= H^s(Y^{r}, \Omega_{Y^{r}}^1) \Rightarrow H^{r+s}(Y,\tOm^1_Y)
\end{equation}
By \cite[Thm 3.12 (3)]{PS} it degenerates on the same level as the weight spectral sequence, which is known to degenerate at $E_2$. The differential $d_1:E_1^{0,1} \to E_1^{0,1}$ is given by $\delta:H^1(\Omega_{Y^{0}})\to H^1(\Omega_{Y^{1}})$ and degeneration at $E_2$ tells us that
\begin{align*}
\Gr_2^W\Gr_F^1 H^2(Y) &= F^1 H^2(Y)/(W_1F^1H^2(Y) + F^2 H^2(Y)) = E_\infty^{0,1} = E_2^{0,1}\\&= \ker\left(H^1(\Omega_{Y^{0}})\to H^1(\Omega_{Y^{1}})\right).
\end{align*}
In other words, as $W_2 \Gr_F^1 H^2(Y) = \Gr_F^1 H^2(Y) = H^1(\tOm_Y)$ there is an exact sequence
\[
0\to W_1 \Gr_F^1 H^2(Y) \to H^1(\tOm_Y) \to[\nu^*] H^1(\Omega_{Y^{0}})\to H^1(\Omega_{Y^{1}}),
\]
so that $\ker \nu^* =  W_1 \Gr_F^1 H^2(Y)$. But the Hodge structure on $H^2(X,\C)$ has pure weight two because $X$ is smooth. In particular, $W_1\Gr_F^1 H^2(X) = 0$. Morphisms of mixed Hodge structures are strict with respect to both filtrations, so we have
\[
0 = i^*(W_1\Gr_F^1H^2(X)) = \img i^* \cap W_1\Gr_F^1 H^2(Y) = \img i^*\cap \ker \nu^*
\]
hence $\nu^*$ is injective on $\img i^*$ and we deduce $\ker i^* = \ker j^*$ completing the proof.
\end{proof}
The following lemma generalizes \cite[Lem 2.3]{Vo92} to the normal crossing case.
\begin{lemma}\label{tisequal}
Suppose $Y$ has  normal crossings. Then we have $T_{M'_Y,0} = T_{M_Y,0}$ for the Zariski tangent spaces at $0 \in M_Y \cap M'_{Y}$.
\end{lemma}
\begin{proof}
By Lemma \ref{lemma my' tangent} the tangent space of $M_Y'$ at $0$ is
\[
T_{M'_Y,0}= \ker \left(j^* \circ \omega':H^1(X,T_X)\to  H^1(\tOm_Y)\right).
\]
By Lemma \ref{lemma weight argument} this equals $\ker \left(i^* \circ \omega':H^1(X,T_X)\to  H^1(\Omega_\tY)\right)$,
where $\tY\to Y$ is the normalization. On the other hand, $M_Y$ is the smooth image of $p:\Def^\lt(i)\to M$ so that
\begin{align*}
T_{M_Y,0} &=\img \left(p_*: T_{\Def^\lt(i),0}\to T_{M,0}\right)\\
 &= \img \left(H^1(X,T_{\sX/S}\langle \sY\rangle) \to H^1(X,T_X)\right) \\
 &= \ker\left(H^1(X,T_X)\to[\alpha] H^1(Y,N'_{Y/X})\right)
\end{align*}
where the third equality holds because the sequence \eqref{defo die lange original} is exact. 

By \eqref{omegat sequence} and Proposition \ref{prop omega is normal} we have a commutative diagram
\begin{equation*}
\begin{xy}
\xymatrix{
H^1(X,\Omega_{X})\ar[r]^{j^*}& H^1(Y,\tOm_Y) \ar[d]^\tom\\
H^1(X,T_X)\ar[u]_{\omega'}\ar[r]^\alpha& H^1(Y,N'_{Y/X})\\
}
\end{xy}
\end{equation*}
where the vertical maps are isomorphisms and this completes the proof.
\end{proof}
\begin{Thm}\label{thm main}
Let $i:Y \into X$ be a normal crossing Lagrangian subvariety in a compact irreducible symplectic manifold 
$X$, let $\nu:\tY\to Y$ be the normalization and denote $j=i\circ\nu$. Then $M_Y$ is smooth at $0$ of codimension
\begin{equation}\label{codim}
\codim_M M_Y = \rk\left(H^2(X,\C) \to[j^*] H^2(\tY,\C)\right)
\end{equation}
in $M$.
\end{Thm}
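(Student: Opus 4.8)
The plan is to prove the three assertions in succession, relying on the two principal inputs already in hand: the smoothness of $M_Y$ at $0$ (Theorem~\ref{thm my smooth}) and the equality of Zariski tangent spaces $T_{M_Y,0}=T_{M'_Y,0}$ (Lemma~\ref{tisequal}). Since two smooth germs with a common tangent space need not agree, the first real task is to exhibit a set-theoretic inclusion; a dimension count will then upgrade this to equality and simultaneously yield smoothness of $M'_Y$, and a short Hodge-theoretic computation will identify the codimension.

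First I would establish the inclusion $M_Y\subseteq M'_Y$ as germs at $0$. A point $t\in M_Y=\img(p:M_i\to M)$ is covered by a locally trivial deformation of $i$, so by Lemma~\ref{lemma deformation is lagrange} the deformed subvariety $Y_t\subset X_t$ is again Lagrangian. As $Y$ has simple normal crossings its normalization $\tY$ is smooth, and it deforms in a smooth proper family $\tsY\to M_i$; the induced family of maps into $\gothX$ gives a morphism of local systems $R^2\pi_*\C\to R^2 f_*\C$ compatible with the flat trivialization $\alpha$. Reading this off on fibers, the flat transport of $j_t^*\omega_t$ is precisely $j^*\alpha(\omega_t)$, while the Lagrange property forces $j_t^*\omega_t=0$ in $H^2(\tY_t,\C)$; hence $j^*\alpha(\omega_t)=0$, that is $t\in M'_Y$. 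This transfer of fibrewise vanishing to the fixed cohomology class $j^*\alpha(\omega_t)$, resting on compatibility of $j_t^*$ with Gau\ss--Manin parallel transport, is the step I expect to require the most care within this proof.

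Granting the inclusion, equality and smoothness follow formally. Set $d:=\dim T_{M_Y,0}=\dim T_{M'_Y,0}$. Theorem~\ref{thm my smooth} gives $\dim_0 M_Y=d$, whereas $\dim_0 M'_Y\le\dim T_{M'_Y,0}=d$; together with $M_Y\subseteq M'_Y$ this forces $\dim_0 M'_Y=d=\dim T_{M'_Y,0}$, so $M'_Y$ is smooth at $0$ of dimension $d$ as well. A smooth irreducible germ of dimension $d$ contained in another such germ of the same dimension must coincide with it, whence $M_Y=M'_Y$.

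For the codimension I would compute $\dim M_Y$ through the tangent space and then match ranks via Hodge theory. By Lemma~\ref{lemma my' tangent} the tangent space is $\ker(j^*\circ\omega':H^1(X,T_X)\to H^1(\tY,\Omega_\tY))$, so $\codim_M M_Y=\rk(j^*\circ\omega')=\rk(j^*:H^1(X,\Omega_X)\to H^1(\tY,\Omega_\tY))$ since $\omega'$ is an isomorphism. Now $j^*:H^2(X,\C)\to H^2(\tY,\C)$ is a morphism of pure weight-two Hodge structures ($X$ and $\tY$ being smooth projective), hence block-diagonal for the Hodge decomposition; its $(2,0)$-block carries $\omega$ to $j^*\omega$, which vanishes because $Y$ is Lagrangian and $\tY$ is smooth, and its $(0,2)$-block vanishes by reality of $j^*$. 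Only the $(1,1)$-block survives, so $\rk(j^*:H^2(X,\C)\to H^2(\tY,\C))=\rk(j^*:H^1(X,\Omega_X)\to H^1(\tY,\Omega_\tY))$, which is the claimed codimension. The genuinely hard analytic input---smoothness of $M_i$ and the constant rank of $p$---has already been supplied in Section~\ref{sec defo lagrange}, so the theorem itself is in the end an assembly of the preceding results.
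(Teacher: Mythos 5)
Your proof is correct, and its second half coincides with the paper's: the chain $\dim M_Y \le \dim M'_Y \le \dim T_{M'_Y,0} = \dim T_{M_Y,0}$ built from Theorem \ref{thm my smooth} and Lemma \ref{tisequal}, forcing smoothness of $M'_Y$ and equality of the two germs, and the identification of the codimension from \eqref{tmyprime} by splitting $j^*$ into Hodge blocks (you spell out the block argument that the paper leaves implicit). Where you genuinely diverge is the inclusion $M_Y \subset M'_Y$. The paper never argues fibrewise: it decomposes $Y=\cup_i Y_i$ and runs the chain $M_Y \subset \bigcap_i M_{Y_i} \subset \bigcap_i M_{[Y_i]} = \bigcap_i M'_{[Y_i]} = \bigcap_i M'_{Y_i} = M'_Y$, using \cite[Lemma 4.5]{CL12} for the first inclusion, reducedness of the smooth spaces $M_{Y_i}$ for the second, and Proposition \ref{prop homologisch}, Corollary \ref{cor rang} (hence Lemma \ref{lem rang}) and Remark \ref{rem voisin} for the equalities \eqref{drei raeume}; the inclusion is thus routed through the component-wise Hodge loci $M_{[Y_i]}$, which is exactly what Section \ref{sec voisin} was set up for. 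You instead deform the normalization in a family over $M_i$ and combine Lemma \ref{lemma deformation is lagrange} with parallel transport in the local systems to translate the fibrewise vanishing $j_t^*\omega_t=0$ into $j^*\alpha(\omega_t)=0$. Your route is shorter and more geometric; the transport step you flagged as delicate is in fact unproblematic, since over a simply connected base both direct images are trivialized local systems and the fibrewise pullbacks form a flat map between them. What the paper's longer route buys is that every intermediate statement is a germ-level statement about complex subspaces already proved earlier, and the extra identifications $M_Y=\bigcap_i M_{[Y_i]}$ are reused later, e.g.\ in Lemma \ref{lemma my in mf}.

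One point you should make explicit: Lemma \ref{lemma deformation is lagrange} is stated only for deformations over Artinian bases $S=\Spec R$, $R\in\Art_\C$, whereas you apply it to the honest fibers over classical points $t$ near $0$. This is bridgeable but is a step, not a citation: the section $j^*\omega$ of the coherent sheaf $R^0f_*\Omega^2_{\tsY/U}$ vanishes on every infinitesimal neighbourhood of $0$ by the Artinian statement, hence its germ at $0$ lies in $\bigcap_n \gothm_0^n\cdot\bigl(R^0f_*\Omega^2_{\tsY/U}\bigr)_0$, which is zero by Krull's intersection theorem, so the section vanishes on a neighbourhood and every nearby fiber is Lagrangian. (The paper makes the same silent extension of Lemma \ref{lemma deformation is lagrange} in the proof of Corollary \ref{cor main}, so this is a defect of rigor, not of substance.) A last cosmetic remark: you call $X$ smooth projective; it is only assumed compact K\"ahler, which is all the Hodge-theoretic block decomposition of $j^*$ requires, while $\tY$ is indeed projective by Corollary \ref{corollary lagrange is projective}.
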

\begin{proof}
By Theorems \ref{thm vormain} and \ref{thm mi smooth} we have $M_Y \subset M_Y'$ and it suffices to show equality. This is deduced from $\dim M_Y \leq \dim M'_{Y} \leq \dim T_{M'_Y,0} = \dim T_{M_Y,0}$,
where the last equality comes from Lemma \ref{tisequal}, again by invoking smoothness of $M_Y$. 
\end{proof}
\begin{Cor}\label{cor main}
Let $K:=\ker\left(H^2(X,\C) \to[j^*] H^2(\tY,\C)\right)$, let $q$ be the Beau\-ville-Bogomolov quadratic form and consider the period domain
\[
Q := \{\alpha \in \P(H^2(X,\C))\mid q(\alpha)=0, q(\alpha+\bar\alpha)>0\}
\]
of $X$. Then the period map $\wp:M\to Q$ identifies $M_Y$ with $\P(K)\cap Q$ locally at $[X]\in M$.
\end{Cor}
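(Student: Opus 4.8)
The plan is to deduce the corollary directly from Theorem \ref{thm main} together with the local Torelli theorem. First I would recall that the period map $\wp$ sends a point $t \in M$ to the line $[\alpha(\omega_t)] \in \P(H^2(X,\C))$ spanned by the symplectic form of the fiber $X_t$, and that by Beauville's local Torelli theorem $\wp$ restricts to a biholomorphism between a neighbourhood of $[X]$ in $M$ and a neighbourhood of $\wp([X])$ in $Q$. Hence it suffices to identify the image $\wp(M_Y)$ with $\P(K) \cap Q$. Note that the base point is already accounted for: since $Y$ is Lagrangian we have $j^*\alpha(\omega_0) = 0$, so $\alpha(\omega_0) \in K$ and $\wp([X]) \in \P(K) \cap Q$.

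Next I would compute this image set-theoretically. By Theorem \ref{thm main} we have $M_Y = M_Y'$, and by Definition \ref{def my'} a point $t$ lies in $M_Y'$ exactly when $j^*\alpha(\omega_t) = 0$, that is, when $\alpha(\omega_t) \in \ker(j^*) = K$. Since $\alpha(\omega_t) \neq 0$, the line $\wp(t) = [\alpha(\omega_t)]$ lies in $\P(K)$ if and only if $\alpha(\omega_t) \in K$, so that $t \in M_Y$ if and only if $\wp(t) \in \P(K)$. As $\wp(t)$ always lies in $Q$, this yields $\wp(M_Y) = \P(K) \cap Q$ near $\wp([X])$.

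Finally, because $\wp$ is a local biholomorphism, the above set-theoretic equality upgrades to an identification of $M_Y$ with $\P(K)\cap Q$ as germs of complex subspaces. I do not expect a genuine obstacle here: the argument is essentially a translation, via the isomorphism $\wp$, of the linear condition $\alpha(\omega_t)\in K$ that cuts out $M_Y$. The only mild point worth confirming is the consistency of dimensions, namely that the codimension of $M_Y$ computed in Theorem \ref{thm main} agrees with that of the linear section $\P(K) \cap Q$ of the period quadric; this is automatic once $\wp$ is known to be an isomorphism carrying the one defining condition to the other.
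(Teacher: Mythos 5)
Your proof is correct, but it takes a genuinely different route from the paper's. The paper never unwinds the defining equation of $M_Y'$; instead it proves only the inclusion $\wp(M_Y)\subset \P(K)\cap Q$, by citing Huybrechts for the fact that $\P(K)\cap Q$ is the Hodge locus where $K^\perp$ remains of type $(1,1)$ and of codimension $\dim K^\perp$ in $Q$, and invoking Lemma \ref{lemma deformation is lagrange} (locally trivial deformations of $i$ stay Lagrangian, so $K^\perp$ stays of type $(1,1)$ over $M_Y$); it then upgrades this inclusion to an equality by a codimension count, $\codim_Q\wp(M_Y)=\codim_M M_Y=\rk\, j^*=\dim K^\perp=\codim_Q\left(\P(K)\cap Q\right)$, using the codimension formula of Theorem \ref{thm main}. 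You instead use the other half of Theorem \ref{thm main}, namely the equality $M_Y=M_Y'$, and observe that $M_Y'=V(j^*\alpha(\omega))$ is by its very definition (Definition \ref{def my'}) the preimage $\wp^{-1}(\P(K))$, since $j^*\alpha(\omega_t)=0$ iff $\alpha(\omega_t)\in K$ iff $\wp(t)\in\P(K)$; combined with local Torelli this gives both inclusions at once, with no need for the Hodge-locus description, the Lagrangian-deformation lemma, or the dimension count. Your route is shorter and more self-contained; the paper's route additionally exhibits $M_Y$ as the Noether--Lefschetz--type locus of the integral classes spanning $K^\perp$, and it would still go through knowing only the inclusion $M_Y\subset M_Y'$ together with the codimension statement. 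The one step you should make precise is the final ``upgrade'' from set-theoretic equality to an identification of germs of complex spaces, which is not automatic in general: here it holds either because the ideal of $M_Y'$ is generated exactly by the $\wp$-pullbacks of the linear forms cutting out $\P(K)$, or because both germs are reduced --- $M_Y$ is smooth by Theorem \ref{thm main}, and $\P(K)\cap Q$ is smooth at $\wp([X])$ because a point $[v]$ of that intersection is singular only if $v\in K^\perp$, while $K^\perp\subset H^{1,1}(X)$ and $\wp([X])$ is spanned by a class of type $(2,0)$.
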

\begin{proof}
As the period map identifies $M$ with $Q$ it suffices to show that $\wp(M_Y)=\P(K)\cap Q$. By  \cite[1.14]{Hu99}, $\P(K)\cap Q$ is the locus where $K^\perp \subset H^2(X,\C)$ remains of type $(1,1)$ and its codimension is $\dim K^\perp$. Note that $K^\perp \subset H^{1,1}(X)$ is defined over $\Z$ and therefore is spanned by the Chern classes of a collection of line bundles on $X$. 
By Lemma \ref{lemma deformation is lagrange} the subspace $K^\perp$ remains of type $(1,1)$ over $M_Y$. Hence, $\wp(M_Y) \subset \P(K)\cap Q$. Moreover, we have
\[
\begin{aligned} 
\codim_Q \wp(M_Y) &= \codim_M M_Y   = \rk\left(j^*:H^2(X,\C) \to H^2(\tY,\C)\right) \\
&= b_2(X) - \dim K = \dim K^\perp\\
& = \codim_Q \P(K)\cap Q.
\end{aligned}
\]
So both sets are equal.
\end{proof}

\section{Applications to Lagrangian fibrations}\label{sec applications}
In this section we give some applications of Theorems \ref{thm vormain} and \ref{thm main} to Lagrangian fibrations. Our main goal is to determine $\codim_M M_Y$. Let $X$ be an irreducible symplectic manifold. Recall that a \emph{Lagrangian fibration} is a morphism $f:X\to B$ with connected fibers to a normal projective variety $B$ such that the general fiber of $f$ is a Lagrangian subvariety.

Lagrangian fibrations are an important tool to study irreducible symplectic manifolds. It is conjectured that an arbitrary irreducible symplectic manifold can always be deformed to one that admits a Lagrangian fibration. Moreover, Matsushita has shown in a series of papers \cite{Matsus99a,Matsus00,Matsus01a,Matsus03} that every fibration of an irreducible symplectic manifold is a Lagrangian fibration and that the base $B$ resembles the projective space $\P^n$. The holomorphic Liouville-Arnol'd theorem shows that every smooth fiber is a complex torus, hence singular fibers enter the focus.

Hwang-Oguiso \cite{HO07} classified generic singular fibers of a Lagrangian fibration. For such a fiber they defined the \emph{characteristic cycle}, a (maybe infinite) cycle $\Theta$ of curves on the fiber, and they have shown that it is either a Kodaira singular fiber of an elliptic surface or an infinite chain of smooth rational curves intersecting transversally (so-called $I_\infty$-type). Locally, the fiber is isomorphic to $\Theta \times \C^{n-1}$ and the intersection graph of the fiber is a quotient of the graph of the characteristic cycle. The datum of the intersection graph of the fiber together with its local singularities is what we call \emph{fiber type}. 

In view of these classification results, Theorem \ref{thm main} applies to the majority of (reductions of) generic singular fibers of a Lagrangian fibration. Only fibers with characteristic cycle of Kodaira types II, III and IV are not normal crossing varieties; for those we have Theorem \ref{thm vormain}. Here it is important that we consider locally trivial deformations. It entails that the fiber type in the Hwang-Oguiso sense does not change so that we obtain an invariant of this fiber type, see Theorem \ref{theorem fibration}. 
Note that this is not in general the case for the characteristic cycle, see \cite[Proposition 5.3]{HO10} for an example.

\enlargethispage{\baselineskip}

\subsection{Deforming fibrations}\label{subsec fibration deforms}
We show first that if we deform a fiber of a fibration then also the fibration deforms, see Lemma \ref{lemma my in mf}. 
Let $f:X\to B$ be a Lagrangian fibration and assume that $B$ is  projective. 
Matsushita showed in \cite[Corollary 1.2]{Matsus09} that there is a smooth hypersurface $\Def(X,f) \subset M$ with a relative Lagrangian fibration extending $f$
\[
\begin{xy}\xymatrix{
\gothX \ar[dr]^\pi\ar[rr]^F && P \ar[dl]\\
&\Def(X,f) &
}\end{xy}
\]
where $\pi:\gothX \to \Def(X,f) $ is the restriction of the universal family to $\Def(X,f) $  and $P\to \Def(X,f) $  is a projective morphism. In particular, $F_t:\gothX_t \to P_t$ is a Lagrangian fibration and $F_0=f$. 
Let $T$ be a smooth fiber of $f$ and let $M_T\subset M$ be as in Theorem \ref{thm mi smooth}. Then $M_T=\Def(X,f)$ by \cite[Proposition 2.1(3)]{Matsus09}. 
The following lemma tells us that if the reduced fiber is preserved as a subvariety, then also the fibration is preserved.
\begin{lemma}\label{lemma my in mf}
Let $f:X\to B$ be a Lagrangian fibration, let $t\in B$ and let $Y=\left(X_t\right)_\red$ be the reduction of a fiber. Then we have $M_Y \subset \Def(X,f) $. Moreover, locally trivial deformations of $Y$ remain fibers.
\end{lemma}
\begin{proof}
By \ref{subsec fibration deforms} it is sufficient to show $M_Y \subset M_T$. Let $Y=\cup_{i\in I} Y_i$ be a decomposition into irreducible components.
As in \eqref{eq inclusions} we have $M_Y \subset \cap_i M_{[\sum_i n_i Y_i]}$  and for a smooth fiber $T$ of $f$ we have $\sum_i n_i[Y_i] = [T]$ and so that $M_{[\sum_i n_i Y_i]} = M_{[T]} = M_T$,
where the last equality is Voisin's theorem. Put together this gives $M_Y\subset M_T = \Def(X,f) $. As Lagrangian fibrations are equidimensional, the last claim follows from the Rigidity Lemma \cite[Lem 1.6]{KM}. 
\end{proof}

\subsection{Codimension estimates}\label{subsec codimension}
Let $X$ be an irreducible symplectic manifold and let $f:X\to B$ be a Lagrangian fibration. 
We put $Y=\left(X_t\right)_\red$ for $t \in D := \left\{t\in B: X_t \textrm{ is singular}\right\}$. The analytic subset $D$ is called the \emph{discriminant locus} of $f$. 
We know by \cite[Prop 4.1]{Hw} and \cite[Prop 3.1]{HO07} that $D$ is nonempty and of pure codimension one. 

Let $D_0\ni t$ be an irreducible component of $D$ and let $X_0:=X\times_{B}D_0 = f^{-1}(D_0)$. Let $Y=\cup_{i\in I} Y_i$ and $X_0=\cup_{j\in J} X_j$ be decompositions into irreducible components and consider the surjective map $j:I\to J$ mapping $i\in I$ to the unique $j=j(i)\in J$ with $Y_i \subset X_j$.

I am very grateful to Keiji Oguiso for explaining the following lemma.
\begin{lemma}\label{lemma codim}
Let $f:X\to B$ be a Lagrangian fibration of a projective irreducible symplectic manifold $X$. Let $X_0=\bigcup_{j\in J} X_j$ where $J=\left\{1, \ldots, r\right\}$ and let $i:Y=\left(X_t\right)_\red \into X$ for $t\in D_0 \subset B$ be the reduction of a general singular fiber contained in $X_0$. Then 
\[
\rk\left(H^2(X,\C)\to[j^*] H^2(\tY,\C)\right)\geq r,
\]
where $\nu:\tY\to Y$ is the normalization and $j = \nu \circ i$. More precisely, the subspace of $H^2(X,\C)$ generated by the classes of the divisors $X_j$ maps onto a subspace of dimension $\geq r-1$ not containing the class of the ample divisor.
\end{lemma}
\begin{proof}
Let $C \subset B$ be a curve obtained by the intersecting $n-1$ general very ample divisors on $B$ and consider the fiber product $X_C=X\times_{B} C$. As $B$ is normal, $X_C$ is smooth. As $t\in D_0$ is general, there is such a curve $C$ with $t\in C$. Let $H$ be a very ample divisor on $X$ and let $H_1,\ldots, H_{n-1}\in \left|H\right|$ be general. Then the intersection $S=X_C \cap H_1\cap\ldots\cap H_{n-1}$ is a smooth surface by Bertini's theorem. By construction it comes with a morphism $g:S\to C$. 

Consider the diagram 
\begin{equation}\label{h2 diagram}
\begin{xy}
\xymatrix{
H^2(X,\C) \ar[r]^{j^*}\ar[d]_\vrho & H^2(\tY,\C) \ar[d]^{\vrho_Y} \\
H^2(S,\C) \ar[r]^{j_S^*} & H^2(\wt{F},\C) \\
}
\end{xy}
\end{equation}
where $F = Y\cap H_1\cap\ldots\cap H_{n-1} \subset S$ and $\wt{F}\to F$ is the normalization. Note that $\tY$ is smooth by \cite[Thm 1.3]{HO07} and $\wt F$ is smooth, as $F$ is a curve. Let $Y=\bigcup_{i=1}^s Y_i$ and $F=\bigcup_{\lambda=1}^q F_\lambda$ be decompositions into irreducible components where $s=\# I$. We put
$F(i):= Y_i \cap H_1\cap\ldots\cap H_{n-1} = \bigcup_{\lambda \in \Lambda_i} F_\lambda$,
where $\Lambda_i \subset \Lambda :=\left\{1,\ldots,q\right\}$ is the subset of all $\lambda$ such that $F_\lambda \subset Y_i$. If the $H_k$ are general enough, $\Lambda$ is the disjoint union of the $\Lambda_i$. 

We will show that the subspace $V \subset H^2(X,\C)$ spanned by the $X_j$ and $H$ maps surjectively onto an $r$-dimensional subspace in $H^2(\wt F,\C)$. This would imply the claim by diagram \eqref{h2 diagram}.

Write $X_0=\sum_j n_j X_j$ and $X_t=\sum_i n_{j(i)} Y_i$ as cycles, where as above $j(i)$ is the unique $j\in J$ with $Y_i \subset X_j$. Recall that $\Lambda = \coprod_i \Lambda_i$ is a disjoint union. So $n_\lambda := n_{j(i)}$ for $\lambda \in \Lambda_i$
is well-defined and we have
$F = \sum_\lambda n_\lambda F_\lambda$.
As $F=\bigcup_{\lambda=1}^q F_\lambda$, we obtain $\wt F=\bigcup_{\lambda=1}^q \wt F_\lambda$ where $\wt F_\lambda$ is the normalization of $F_\lambda$. Thus,
\[
H^2(\wt{F},\C)\isom \bigoplus_{\lambda = 1}^q H^2(\wt{F}_\lambda,\C)\isom \C^q.
\]
If we denote the intersection pairing on $S$ by $(\cdot,\cdot)_S$, then under this isomorphism $j_S^*: H^2(S,\C) \to H^2(\wt{F},\C)$ is given by 
\[
\alpha \mapsto \left((\alpha, F_1)_S,\ldots, (\alpha, F_q)_S\right).
\]
Let $\left\{x_\lambda\mid \lambda \in \Lambda\right\}\subset H^2(\wt{F},\C)^\vee$ be the dual basis of the basis of $H^2(\wt{F},\C)$ obtained corresponding to the standard basis of $\C^q\isom H^2(\wt{F},\C)$. By Zariski's Lemma \cite[Ch III, Lem 8.2]{BPVH} the subspace $W\subset H^2(S,\C)$ spanned by the classes of the $F_\lambda$ maps surjectively to the hyperplane of $\C^q$ given by $\sum_\lambda n_\lambda x_\lambda=0$, So the subspace of $H^2(S,\C)$ spanned by the classes of the $F_\lambda$ and $H\vert_S$ maps surjectively onto $\C^q$. We have
$\vrho_Y(j^*X_j)= j_S^*\vrho(X_j) = \left(\left(\vrho(X_j), F_\lambda\right)_S\right)_\lambda.$
As the $\Lambda_i$ are mutually disjoint, so are the $\Lambda_j:= \bigcup_{j(i) = j} \Lambda_i$. We see from
$\left(\vrho(X_j), F_\lambda\right)_S = \sum_{\mu \in \Lambda_j}(F_\mu, F_\lambda)_S$
that the subspace of $H^2(X,\C)$ generated by the $X_j$ surjects onto a subspace of $\C^q$ of dimension $\geq r-1$. The claim follows as the image of $V$ does not contain $j_S^*(H\vert_S)$.
\end{proof}
\begin{Cor}\label{cor codim}
In the situation of the preceding lemma $\codim M_Y \; \geq r$.
\end{Cor}
\begin{proof} This follows from Theorem \ref{thm vormain} and Lemma \ref{lemma codim}.
\end{proof}
Note that there is no need for a normal crossing hypothesis in the corollary as we only prove an estimate and no equality. The codimension of $M_Y$ is thus bounded by the number of irreducible components of $X_0=f^{-1}(D_0)$ whereas the number of irreducible components of $Y$ does not a priori play a role. Hence, a very interesting and important question is the following
\begin{Que}
Let $Y=\cup_{i\in I} Y_i$ and $X_0=\cup_{j\in J} X_j$ as in the beginning of section \ref{subsec codimension}. Is then $\# I= \# J$? Do we always have $\codim_M M_Y  = \# J$ for normal crossing $Y$?
\end{Que}
There is no obvious reason, why these numbers should be equal, but in all examples we know they are equal. 
Recall that general singular fibers have been classified by Hwang-Oguiso according to their characteristic 1-cycle: this is an effective 1-cycle on a fiber $Y\subset X$, possibly an infinite sum of curves. It was shown to be of Kodaira type or of type $I_\infty$, see \cite[Theorem 1.4]{HO07}  and \cite[Theorem 2.4]{HO11}. The type of a singular fiber will be the type of its characteristic 1-cycle.
\begin{theorem}\label{theorem fibration}
Let $X$ be an irreducible symplectic manifold and let $f:X\to B$ be a Lagrangian fibration. Then $X$ can be deformed, keeping the fibration, to an irreducible symplectic manifold $X'$ with a Lagrangian fibration $f':X'\to B'$ such that outside a codimension $2$ subset $Z\subset B'$, all singular fibers of $f'$ over the complement of $Z$ have a characteristic cycle of Kodaira type I, II, III or IV and such that for every irreducible component $D_0$ of the discriminant divisor the preimage $X'_{D_0}=(f')^{-1}(0)$ is irreducible.
\end{theorem}
\begin{proof}
Let $D_0$ be an irreducible component of the discriminant divisor and let $Y=(X_t)_\red$ for $t\in D_0$ be a general singular fiber. By Lemma \ref{lemma my in mf}, the space $M_Y$ is contained in $\Def(X,f)$. As the fiber type of a singular fiber is generically constant along an irreducible component of the discriminant divisor, it suffices to show that $M_Y \subsetneq \Def(X,f)$ if $X_t$ is not of type I-IV. But for all other Kodaira fibers, there are irreducible components of $X_t$ with different multiplicities. Fiber components with different multiplicities lie in different components of $X_0=f^{-1}(D_0)$, hence $\codim M_Y \geq 2$ by Corollary \ref{cor codim}. By \cite[Corollary 1.2]{Matsus09}, $\codim \Def(X,f)=1$ so that $M_Y\subsetneq \Def(X,f)$ and we conclude the proof.
\end{proof}
\begin{example}\label{example k3}
In the case of K3 surfaces, the situation becomes easier. For elliptic K3 surfaces it is not difficult to see that $\codim M_Y=\# I= \# J$ of irreducible components of the reduction $Y$ of a fiber, if the latter has normal crossings, and $\codim M_Y \geq \# I$ in all other cases, see \cite[Thm VII.3.8]{mydiss}. The analogue of the Hodge-de Rham spectral sequence for $\tOm_Y$ does not degenerate at $E_1$ if $Y$ does not have normal crossings, but one can show that for infinitesimal deformations $\sY\to S$ the $H^q(\tOm_{\sY/S}^p)$ are free $\sO_S$-modules if one uses the differentials in the spectral sequence. With this at hand, one deduces as in the normal crossing case that $\Def^\lt(i)$ and $M_Y$ are smooth. So $M_Y \subset M_Y'$ in all cases.
Consequently, using Theorem \ref{theorem fibration} any elliptic K3 surface can be deformed as a fibration to an elliptic K3 with only nodal and cuspidal curves as singularities. There are examples, where a cuspidal rational curve $Y$ deforms into two nodal curves so that we have $M_Y \subsetneq M_Y'$. 
\end{example}
\subsection{Vista}\label{sec generalizations}
There are several results assuming the general singular fibers to be of a special kind, see \cite{HO10}, \cite{Sa08}, \cite{Sa12}, \cite{Th}. If we knew that complicated general singular fibers only show up in higher codimension in $M$, we could always deform to such special situations.


\end{document}